\newcommand{\QQ}{\mathbb Q}
\newcommand{\ZZ}{\mathbb Z}
\newcommand{\RR}{\mathbb R}
\newcommand{\tn}[1]{\textnormal{#1}}
\newcommand{\primorial}[1]{#1\textnormal{\textinterrobang}}
\newcommand{\abcs}{\frak X}
\newcommand{\insulator}[1]{{\mathcal I}(#1)}
\newtheorem*{main}{Main Theorem}
\newtheorem{theorem}{Theorem}
\newtheorem{lemma}{Lemma}
\newtheorem{proposition}[theorem]{Proposition}
\newtheorem{conj}{Conjecture}
\begin{document}
\title{Insulators of ABC Solutions}
\author{James E. Weigandt}
\begin{abstract} This note studies various ways of measuring the complexity of primitive solutions to the equation $A \,+\, B\, + \,C = 0$ and conjectures relating them. We define the insulator $\mathcal I(A,B,C)$ of a primitive solution as the smallest positive integer $\mathcal I$ such that the primes dividing the product $ABC \cdot \mathcal I$ are exactly those below a given bound. We show that the strong XYZ Conjecture of Lagarias and Soundararajan implies there are only finitely many primitive solutions $(A,B,C)$ with a given insulator.
\end{abstract}
\maketitle

\section{Introduction}
This note concerns integer solutions to the ternary linear equation
\begin{equation}\label{abceqn}
A  + B + C  = 0.
\end{equation}
We call such solutions {\bf primitive} if $\gcd(A,B,C) = 1$ and {\bf non-cuspidal} if the product $ABC$ is non-zero. Let $\abcs$ denote the set of all primitive and non-cuspidal integer solutions to (\ref{abceqn}). 

We measure the complexity of these solutions, among other ways, by the {\bf height} $H(A,B,C) = \max \{ |A|, |B|, |C| \}$, and the  {\bf smoothness}  $S(A,B,C)$ defined as the largest prime divisor of $ABC$. 

The Lagarias--Soundararajan XYZ Conjecture \cite{MR2780626} concerns the simultaneous behavior of $H(A,B,C)$ and $S(A,B,C)$. As the name suggests, the XYZ Conjecture is related to the celebrated ABC Conjecture, known to have far-reaching consequences. The main theorem of this note is a consequence of the XYZ Conjecture which seems surprising and does not appear to follow easily from the ABC Conjecture. 

Say that a non-zero integer is {\bf insulated} if it is divisible by exactly the primes less than a given bound and define the {\bf insulator} $\mathcal I(A,B,C)$ as the smallest positive integer $\mathcal I$ such that $ABC \cdot \mathcal I$ is insulated.
\begin{main} If the strong form of the XYZ Conjecture is true, then there are only finitely many $(A,B,C) \in \abcs$ with any fixed value of the insulator $\mathcal I(A,B,C)$.
\end{main}

\subsection{Acknowledgements} 

This material is based upon work supported by the National Science Foundation Graduate Research Fellowship under Grant No. DGE-1333468. The author thanks Edray Goins and Jeffery Lagarias for helpful conversations.

 \section{Semi-Heights on ABC Solutions}
Define a {\bf semi-height} on $\abcs$ as a function $h : \abcs \to \RR$ for which the set $\abcs_h(X) = \{ (A,B,C) \in \abcs \mid h(A,B,C) < X \}$
is finite for all real numbers $X > 0$. This section describes some semi-heights on $\abcs$.

For non-zero integers $n$, define $\tn{rad}(n)$ as the largest square-free divisor of $n$ and $P^+(n)$ as the largest non-composite divisor of $n$.

\begin{lemma} \label{semi_heights} Define the {\bf height} by $H(A,B,C) = \max\{|A|, |B|,|C|\}$, the {\bf conductor} by $N(A,B,C) = \tn{rad}(ABC)$, and the {\bf smoothness} by $S(A,B,C) = P^+(ABC)$. These are all semi-heights on $\abcs$.
\end{lemma}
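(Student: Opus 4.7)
The plan is to handle the three functions separately. The height case is essentially immediate, while the conductor and smoothness cases rest on the classical finiteness theorem for $S$-unit equations, and the conductor case reduces directly to the smoothness case via a simple inequality.

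First I would dispatch the height: if $H(A,B,C) < X$, then each of $A$, $B$, $C$ is an integer in the interval $(-X, X)$, so $\abcs_H(X)$ has at most $(2X)^3$ elements. No arithmetic input beyond counting is required here.

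Next I would treat the smoothness. If $S(A,B,C) < X$, then every prime dividing $ABC$ lies in the finite set $T$ of primes below $X$, so $A$, $B$, and $C$ are $T$-units in $\QQ^\times$. Dividing the relation $A + B + C = 0$ through by $-C$ rewrites it as $x + y = 1$ with $x, y \in \QQ^\times$ both $T$-units, and Mahler's classical theorem on $S$-unit equations guarantees this has only finitely many solutions. The primitivity condition $\gcd(A,B,C) = 1$ pins each projective class down to at most two integer representatives (differing only by an overall sign), so $\abcs_S(X)$ is finite. Finally, for the conductor: since $P^+(ABC)$ is a prime divisor of $ABC$, it also divides $\tn{rad}(ABC)$, and therefore $S(A,B,C) \leq N(A,B,C)$. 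Hence $\abcs_N(X) \subseteq \abcs_S(X)$, which is finite by the previous step.

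The only substantive obstacle is the invocation of $S$-unit finiteness; I would cite Mahler's theorem rather than reprove it. Everything else is routine bookkeeping, and I would probably present the three cases in the order $H$, $S$, $N$ so that the reduction $\{N < X\} \subseteq \{S < X\}$ can be stated as the closing line.
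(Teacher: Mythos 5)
Your proof is correct, and the $N$ and $S$ cases track the paper's argument closely: the paper also reduces $N$ to $S$ via $S(A,B,C) \leq N(A,B,C)$ and handles $S$ by passing to the $T$-unit solution $(-A/C, -B/C)$ of $x + y = 1$ and invoking finiteness for $S$-unit equations (the paper cites Bombieri--Gubler; Mahler's theorem over $\QQ$ is the same result and is an equally valid citation). The genuine difference is your treatment of $H$: you count directly, observing that $|A|, |B|, |C| < X$ forces $(A,B,C)$ into a box of size at most $(2X)^3$, whereas the paper folds $H$ into the same reduction by noting $S(A,B,C) \leq H(A,B,C)$ as well, so that $\abcs_H(X) \cup \abcs_N(X) \subseteq \abcs_S(X)$ and a single appeal to $S$-unit finiteness covers all three. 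Your route has the small virtue of making the height case entirely elementary and independent of the Diophantine machinery; the paper's route is more uniform, disposing of all three semi-heights with one inequality and one citation. Either is a clean proof; if anything, you could tighten your count to $(2X)^2$ since $C = -(A+B)$ is determined by $A$ and $B$, but that is cosmetic.
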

\begin{proof} The functions $H$, $N$, and $S$ are related by the inequality
\begin{equation}
S(A,B,C) \leq \min \{ H(A,B,C), N(A,B,C) \}.
\end{equation}
It follows that $\abcs_H(X) \cup \abcs_N(X) \subseteq \abcs_S(X)$ for all $X > 0$ and it suffices to show that $\abcs_S(X)$ is finite for all $X > 0$. 

Given $X > 0$, define $\Sigma = \Sigma(X)$ as the finite set consisting of all primes less than $X$. If $S(A,B,C) < X$ then the associated solution $(x,y) = (-A/C,-B/C)$ to the unit equation
\begin{equation}\label{unit}
x + y = 1
\end{equation} 
must satisfy $x,y \in \ZZ_\Sigma^\times$. Since $\ZZ_{\Sigma}^\times$ is a finitely generated subgroup of $\bar{\QQ}^\times$, it follows from \cite[Theorem 5.2.1]{MR2216774} that $(x,y) = (-A/C,-B/C)$ belongs to a finite set $U(X)$ which depends on $X$. Since two solutions $(A,B,C), (A',B',C') \in \abcs$ satisfy $(-A/C, -B/C) = (-A'/C', -B'/C')$
if and only if $(A', B', C') = (uA, uB, uC)$ for some $u \in \ZZ^\times = \{\pm 1\}$, the finiteness of $\abcs_S(X)$ follows from that of $U(X)$.
\end{proof}

There are far reaching conjectures relating the above semi-heights. For example, the celebrated ABC Conjecture of Masser and Oesterl\'e relates $H(A,B,C)$ and $N(A,B,C)$.
\begin{conj}[ABC Conjecture, \cite{MR992208}] \label{abcconj} For every $\epsilon > 0$, there are only finitely many $(A,B,C) \in \abcs$ that satisfy $H(A,B,C)  > N(A,B,C)^{1 + \epsilon}$.
\end{conj}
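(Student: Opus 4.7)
Conjecture \ref{abcconj} is the ABC Conjecture itself, one of the central open problems of Diophantine number theory, so what follows is a plan of attack rather than a genuine route to a theorem. The most structured entry point is the Frey--Szpiro correspondence: to each $(A,B,C) \in \abcs$ I would associate the elliptic curve $E_{A,B,C} : y^2 = x(x-A)(x+B)$, whose minimal discriminant is, up to powers of $2$, equal to $(ABC)^2$ and whose conductor divides $N(A,B,C)$. Conjecture \ref{abcconj} is then equivalent, up to the choice of $\epsilon$, to Szpiro's inequality $|\Delta_E| \ll_\epsilon N_E^{6+\epsilon}$. This trades the additive statement $A+B+C=0$ for a multiplicative statement about an elliptic curve, and makes available the full apparatus of modularity, Galois representations, and Arakelov geometry. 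The passage from Szpiro to the form of Conjecture \ref{abcconj} bounding $H$ by $N^{1+\epsilon}$ is then a routine manipulation of the standard models.

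My plan is to combine modularity with effective height estimates on $X_0(N_E)$. The modularity theorem attaches to $E_{A,B,C}$ a newform $f_E \in S_2(\Gamma_0(N_E))$, and Arakelov theory relates $\log|\Delta_E|$ to the stable Faltings height of $E$ up to a $\log N_E$ error. The next step is to bound the Faltings height of the modular parametrization $X_0(N_E) \to E$ in terms of the Petersson norm and sup norm of $f_E$ on the cuspidal region of the fundamental domain for $\Gamma_0(N_E)$. With such a bound, Conjecture \ref{abcconj} is reduced to an effective equidistribution statement for Heegner or automorphic points on $X_0(N_E)$, where partial unconditional inputs are provided by the work of Gross--Zagier, Zhang, and subsequent authors. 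One could, in parallel, try to implement an arithmetic analogue of the Mason--Stothers Wronskian bound for function fields, replacing the derivation on $\CC(t)$ by an arithmetic derivation in the sense of Buium.

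The hard part --- and the reason Conjecture \ref{abcconj} has resisted proof since the 1980s --- is that every known unconditional bound in this circle of ideas loses a \emph{polynomial} factor in $N_E$, whereas Conjecture \ref{abcconj} demands an error term that is essentially \emph{logarithmic} in $N(A,B,C)$, uniformly in $\epsilon$. Closing the polynomial-versus-logarithmic gap is the entire substance of the problem; everything upstream, including the Frey--Szpiro reduction, modularity, and the manipulation of $\epsilon$, is essentially formal once those inputs are in hand. Plausible avenues for closing the gap include Mochizuki's inter-universal Teichm\"uller theory (whose status remains disputed), an effective form of Vojta's conjecture for $\PP^1 \setminus \{0,1,\infty\}$, or a genuinely new archimedean input controlling the Arakelov self-intersection of the relative dualizing sheaf on $X_0(N)$. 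Producing any one of these on the required scale is precisely what the conjecture asks for, and I would expect that, not the scaffolding around it, to absorb the entire effort.
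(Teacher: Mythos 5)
The statement you were asked to prove is Conjecture~2 in the paper, and the paper does not prove it: it is the ABC Conjecture itself, attributed to Masser and Oesterl\'e and cited to Oesterl\'e's Bourbaki report. There is therefore no proof in the paper to compare your work against. You correctly recognize this and offer a survey of strategies rather than a proof, which is the honest response; no reviewer should expect otherwise.

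As a survey your sketch is broadly accurate. The reduction of $A+B+C=0$ to the Frey curve $E_{A,B,C}: y^2 = x(x-A)(x+B)$, the identification of the minimal discriminant with $(ABC)^2$ up to a bounded power of $2$, and the equivalence (up to adjusting $\epsilon$) between Conjecture~\ref{abcconj} and the modified Szpiro inequality are all standard and correctly stated. The invocation of modularity and Arakelov-theoretic control of the Faltings height is a genuine line of attack that goes back to Frey, Szpiro, Parshin, and Moret-Bailly, and your diagnosis of where it stalls --- every unconditional archimedean or Arakelov bound currently loses a power of $N_E$ where a logarithm is required --- is the correct one. Two caveats worth registering: the ``equivalent, up to the choice of $\epsilon$'' phrasing should be sharpened, since the exponent transfer between Szpiro and ABC is lossy in a specific and well-documented way; and the parenthetical reference to Mochizuki's work should be treated as a citation to a disputed claim rather than a live avenue, which you do flag. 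None of this turns the scaffolding into a proof, and you say as much, so the proposal is appropriate in kind even though it cannot close the gap that constitutes the conjecture. Within the logic of this paper, Conjecture~\ref{abcconj} is context: the author points out that it is not even known to imply the weak XYZ-type statement (Conjecture~3) that the main theorem actually uses, so a proof of ABC, even if you had one, would not by itself discharge the paper's hypotheses.
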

The condition that $\epsilon > 0$ is necessary. To see this, note that for $k \geq 2$ we have  $(A_k, B_k, C_k) = (2^k(2^k -2), 1, -(2^k - 1)^2) \in \abcs$ and
\begin{equation}
H(A_k, B_k, C_k) =  (2^k - 1)^2  > (2^k - 1)(2^{k} - 2) \geq N(A_k, B_k, C_k).
\end{equation}
Recently, Lagarias and Soundararajan formulated a conjecture relating $H(A,B,C)$ and $S(A,B,C)$.
\begin{conj}[XYZ Conjecture (strong form), \cite{MR2780626}] \label{xyzconj} \tn{ } 
\begin{itemize}
	\item[(a)] For every $\epsilon > 0$, only finitely many $(A,B,C) \in \abcs$ satisfy 
	$$\log H(A,B,C) > S(A,B,C)^{2/3 + \epsilon}.$$
	\item[(b)] For every $\epsilon > 0$, infinitely many $(A,B,C) \in \abcs$ satisfy
	$$\log H(A,B,C) > S(A,B,C)^{2/3 - \epsilon}.$$
\end{itemize}
\end{conj}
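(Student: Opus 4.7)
The plan is to play a lower bound on $\log H(A,B,C)$ forced by the insulator condition against the upper bound $\log H(A,B,C) \leq S(A,B,C)^{2/3+\epsilon}$ supplied by the strong XYZ Conjecture. Since Lemma \ref{semi_heights} exhibits $S$ as a semi-height, it will suffice to show that $S(A,B,C)$ is bounded on the level set $\{(A,B,C) \in \abcs : \mathcal I(A,B,C) = I_0\}$. I would begin by unpacking the insulator: with $S = S(A,B,C)$, minimality forces $\mathcal I(A,B,C)$ to be squarefree and to consist of exactly the primes $p \leq S$ that do not divide $ABC$, yielding
\[
\mathcal I(A,B,C) \,=\, \prod_{\substack{p \leq S \\ p \,\nmid\, ABC}} p
\qquad \text{and} \qquad
\tn{rad}(ABC) \,=\, \frac{1}{\mathcal I(A,B,C)}\prod_{p \leq S} p.
\]
Fixing $\mathcal I(A,B,C) = I_0$ therefore pins down exactly which primes $\leq S$ divide $ABC$, up to the choice of $S$ itself.

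Next I would derive the lower bound on $H$. Since $\gcd(A,B,C) = 1$ together with $A + B + C = 0$ forces $A,B,C$ to be pairwise coprime, one has $|ABC| = |A||B||C| \leq H(A,B,C)^3$ and $|ABC| \geq \tn{rad}(ABC)$. Combining these with Chebyshev's estimate $\theta(S) := \sum_{p \leq S}\log p = S(1+o(1))$ gives
\[
\log H(A,B,C) \,\geq\, \tfrac{1}{3}\bigl(\theta(S) - \log I_0\bigr) \,=\, \tfrac{S}{3}(1+o(1)) - O_{I_0}(1).
\]
Applying Conjecture \ref{xyzconj}(a) with any $\epsilon \in (0,1/3)$, say $\epsilon = 1/12$, yields $\log H(A,B,C) \leq S^{3/4}$ outside a finite exceptional set, whence $\tfrac{S}{3}(1+o(1)) \leq S^{3/4}$ on our level set; this forces $S$ to be bounded, and the semi-height property of $S$ then completes the argument.

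I expect the main obstacle to be the unpacking step, where one must rule out any competing choice of $\mathcal I$ with a larger insulating bound --- the identity reduces quickly once one notes that introducing any prime $> S$ into $\mathcal I$ only creates further primes that must themselves be included, making $\mathcal I$ strictly larger. The remainder is a straightforward comparison between linear and subpolynomial growth, and in fact any upper bound of the shape $\log H \leq S^{1-\delta}$ with $\delta > 0$ would suffice in place of the full strong XYZ Conjecture (a).
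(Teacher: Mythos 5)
The statement you were asked about is the Lagarias--Soundararajan XYZ Conjecture itself, which the paper does not prove---it is imported from \cite{MR2780626} and used as a hypothesis. A conjecture has no proof in the paper to compare against, and indeed your write-up is not an attempt at the conjecture: it is a proof of the paper's Main Theorem (that the strong XYZ Conjecture implies each level set $\{(A,B,C) \in \abcs : \mathcal I(A,B,C) = I_0\}$ is finite). Read that way, your argument is correct and runs along essentially the same track as the paper. You identify $\insulator{n} = \prod_{p \leq P^+(n),\, p \nmid n} p$, which is exactly the identity $\primorial{P^+(n)} = \tn{rad}(n)\,\insulator{n}$ of equation (\ref{measure_fail}); you use pairwise coprimality of $A,B,C$ to get $\log H(A,B,C) \geq \tfrac{1}{3}\log \tn{rad}(ABC)$, which is the paper's inequality (\ref{height_rad}); you invoke Chebyshev to replace $\theta(S)$ by something linear in $S$, which is the content of Lemma \ref{sandwich}; and you then let the linear-in-$S$ lower bound on $\log H$ collide with the $S^{2/3+\epsilon}$ upper bound from Conjecture \ref{xyzconj}(a), bounding $S$, after which finiteness follows from $S$ being a semi-height (Lemma \ref{semi_heights}). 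The paper distributes this same mechanism across Proposition \ref{prop1}, Lemma \ref{sandwich}, and Theorem \ref{thm1}, routing through the intermediate Conjecture \ref{weak_xyz}; you have inlined those steps into a single pass, which is legitimate and arguably more transparent. Your closing remark that any bound $\log H \leq S^{1-\delta}$ would suffice is morally the reason the paper isolates Conjecture \ref{weak_xyz} as the actual hypothesis used.
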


The XYZ and ABC conjectures are related by \cite[Theorem 1.1]{MR2780626} which asserts that the ABC conjecture implies a weaker version of Conjecture \ref{xyzconj}(a) where the exponent $2/3 + \epsilon$ is replaced by $1 + \epsilon$. We state here a weak version of Conjecture \ref{xyzconj}(a) which we don't know to follow from Conjecture \ref{abcconj} at the time of this writing.

\begin{conj}\label{weak_xyz} For every $\delta > 0$ there are only finitely many solutions $(A,B,C) \in \abcs$ for which $\log H(A,B,C) > \delta \, S(A,B,C)$.
\end{conj}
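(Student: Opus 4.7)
The plan is to deduce Conjecture \ref{weak_xyz} directly from the strong XYZ Conjecture \ref{xyzconj}(a). Fix $\delta > 0$ and pick any $\epsilon \in (0, 1/3)$; the key point is that $2/3 + \epsilon < 1$, so the linear bound $\delta \, S$ eventually overtakes the sublinear bound $S^{2/3 + \epsilon}$. A short computation gives $\delta \, S \geq S^{2/3 + \epsilon}$ precisely when $S \geq X_0 := \delta^{-1/(1/3 - \epsilon)}$.

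I would then split the set $\{(A,B,C) \in \abcs : \log H(A,B,C) > \delta \, S(A,B,C)\}$ into two pieces according to whether $S(A,B,C) \geq X_0$ or $S(A,B,C) < X_0$. In the first regime, the chain $\log H(A,B,C) > \delta \, S(A,B,C) \geq S(A,B,C)^{2/3 + \epsilon}$ holds, and Conjecture \ref{xyzconj}(a) applied with this fixed $\epsilon$ yields only finitely many such solutions. In the second regime, the set $\abcs_S(X_0)$ is finite by Lemma \ref{semi_heights}, since $S$ is a semi-height. Being a union of two finite sets, the total is finite.

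The genuine obstacle, which the author emphasizes in the surrounding discussion, lies in making any statement of this flavor unconditional on strong XYZ. The best currently known implication \cite[Theorem 1.1]{MR2780626} replaces the exponent $2/3 + \epsilon$ in Conjecture \ref{xyzconj}(a) by $1 + \epsilon$, and once the exponent on $S$ exceeds $1$ the dichotomy above collapses: $\delta \, S$ never dominates $S^{1 + \epsilon}$ for large $S$, so the split into regimes no longer traps the potential counterexamples in a finite set. Deducing Conjecture \ref{weak_xyz} from Conjecture \ref{abcconj} would therefore require either improving this transfer exponent to something strictly below $1$ or finding an ABC-theoretic argument that bypasses the XYZ conjecture entirely, and this is exactly the barrier that keeps the weak form from being known unconditionally.
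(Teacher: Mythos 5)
Your argument is correct and is essentially the paper's proof of Proposition \ref{prop1} recast directly rather than by contradiction: the paper fixes $\epsilon = 1/6$ (exponent $5/6$) and derives that infinitely many counterexamples would have to satisfy $S(A,B,C) < \delta^{-6}$, contradicting that $S$ is a semi-height by Lemma \ref{semi_heights}, which is precisely your dichotomy at the threshold $X_0 = \delta^{-1/(1/3-\epsilon)}$ specialized to $\epsilon = 1/6$. The closing remarks about why the exponent $1+\epsilon$ from the ABC-to-XYZ transfer defeats this argument accurately reflect the paper's discussion preceding the conjecture.
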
 

\begin{proposition}\label{prop1} Conjecture \ref{xyzconj}(a) implies Conjecture \ref{weak_xyz}.
\end{proposition}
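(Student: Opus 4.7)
The plan is to compare the two inequalities pointwise in the smoothness $S = S(A,B,C)$, splitting the set $\abcs$ according to whether $S$ is small or large. Fix $\delta > 0$; I want to show the set
\[
T_\delta = \{(A,B,C) \in \abcs \,:\, \log H(A,B,C) > \delta \, S(A,B,C)\}
\]
is finite. The guiding observation is that for any fixed $\epsilon \in (0, 1/3)$, the function $s \mapsto \delta s - s^{2/3 + \epsilon}$ tends to $+\infty$, so $\delta s > s^{2/3 + \epsilon}$ holds for all $s$ beyond some threshold $s_0 = s_0(\delta, \epsilon)$.

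First I would fix a convenient $\epsilon$, say $\epsilon = 1/6$, and solve explicitly for the threshold: $\delta s > s^{5/6}$ is equivalent to $s^{1/6} > \delta^{-1}$, so the threshold is $s_0 = \delta^{-6}$. Next, split $T_\delta = T_\delta^{\leq} \sqcup T_\delta^{>}$ according to whether $S(A,B,C) \leq s_0$ or $S(A,B,C) > s_0$. For triples in $T_\delta^{\leq}$, finiteness is immediate from Lemma \ref{semi_heights}, since $S$ is a semi-height and $T_\delta^{\leq} \subseteq \abcs_S(s_0 + 1)$. For triples $(A,B,C) \in T_\delta^{>}$, the defining inequality of $T_\delta$ and the threshold choice combine to give
\[
\log H(A,B,C) > \delta \, S(A,B,C) > S(A,B,C)^{2/3 + \epsilon},
\]
so $T_\delta^{>}$ is contained in the set from Conjecture \ref{xyzconj}(a) with this $\epsilon$, which is finite by hypothesis.

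Union of two finite sets is finite, completing the implication. There is no real obstacle here: the entire argument is a comparison of growth rates for $s \to \infty$, with the semi-height property of $S$ absorbing the bounded-$S$ regime. The only mild care required is choosing $\epsilon$ strictly less than $1/3$ so that the exponent $2/3 + \epsilon$ stays below $1$ and the crossover $\delta s > s^{2/3 + \epsilon}$ actually occurs.
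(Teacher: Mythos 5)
Your proof is correct and essentially the same as the paper's: you use the same choice $\epsilon = 1/6$ and the same threshold $\delta^{-6}$, and you invoke the same two finiteness facts (the semi-height property of $S$ and Conjecture \ref{xyzconj}(a)). The only cosmetic difference is that you argue directly via a partition of $T_\delta$, whereas the paper phrases it as a proof by contradiction.
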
 
\begin{proof} Let $\delta > 0$ be given and let $X = \delta^{-6}$. Assume that Conjecture \ref{weak_xyz} is false. Assuming \ref{xyzconj}(a) we can then find infinitely many $(A,B,C) \in \abcs$ such that
\begin{equation}
	\delta S(A,B,C) < \log H(A,B,C) < S(A,B,C)^{5/6}
\end{equation}
If follows that $S(A,B,C) <  \delta^{-6} = X$ for infinitely many $(A,B,C) \in \abcs$, a contradiction to Lemma \ref{semi_heights}.
\end{proof}

\section{Insulators}

We say a non-zero integer $n$ is {\bf insulated} if every prime $p \leq P^+(n)$ divides $n$. For example, the primorials $\primorial{n} = \tn{rad}(n!) = \prod_{p \leq n} p$ are insulated. Define the {\bf insulator} of a non-zero integer $n$ as the smallest positive integer $\insulator{n}$ such that $n \cdot \insulator{n}$ is insulated. For example, $\insulator{256256} = \insulator{2^8 \cdot 7 \cdot 11 \cdot 13} = 3 \cdot 5 = 15$. Since \begin{equation}\label{measure_fail}
\primorial{P^+(n)} = \tn{rad}(n) \, \insulator{n},
\end{equation}
the insulator $\insulator{n}$ measures the failure of $\tn{rad}(n)$ to be a primorial. 

\begin{lemma} \label{sandwich} Suppose that $\{ n_k \}_{k = 1}^\infty$ is a sequence of non-zero integers such that $\lim_{k \to \infty} P^+(n_k) = \infty$ and $\insulator{n_k}$ is bounded. Let $\alpha$ and $\beta$ be real numbers such that $0 < \alpha < \log 2$ and $\beta > \log 4$. The inequality 
\begin{equation}\label{lograd_on_smoothness}
\alpha P^+(n_k) < \log \tn{rad}(n_k) < \beta P^+(n_k)
\end{equation}
holds for $k$ sufficiently large.
\end{lemma}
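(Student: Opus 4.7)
The plan is to take logarithms in the identity (\ref{measure_fail}) and invoke standard estimates for the Chebyshev theta function $\theta(x) = \sum_{p \leq x} \log p$. Noting that $\log \primorial{P^+(n_k)} = \theta(P^+(n_k))$, the identity rearranges as
\[
\log \tn{rad}(n_k) \;=\; \theta(P^+(n_k)) \;-\; \log \insulator{n_k}.
\]
By hypothesis $\insulator{n_k}$ is bounded, so $\log \insulator{n_k}$ is uniformly bounded by some constant $M \geq 0$, and the lemma reduces to two-sided bounds on $\theta$.

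For the upper bound in (\ref{lograd_on_smoothness}), I would cite the unconditional estimate $\theta(x) < (\log 4)\,x$ for $x \geq 1$, which follows from the standard bound on the central binomial coefficient $\binom{2n}{n}$. This gives $\log \tn{rad}(n_k) \leq \theta(P^+(n_k)) < (\log 4)\,P^+(n_k)$, and since $\beta > \log 4$ strictly, the right-hand side is dominated by $\beta\,P^+(n_k)$ for every $k$ with $P^+(n_k) > 0$.

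For the lower bound I would use the classical Chebyshev estimate $\theta(x) > (\log 2)\,x$ for $x$ sufficiently large (the PNT gives the stronger $\theta(x) \sim x$, if one prefers). This yields $\log \tn{rad}(n_k) > (\log 2)\,P^+(n_k) - M$ for $k$ large. Since $\alpha < \log 2$ strictly and $P^+(n_k) \to \infty$ by hypothesis, the gap $(\log 2 - \alpha)\,P^+(n_k)$ eventually exceeds $M$, giving $\log \tn{rad}(n_k) > \alpha\,P^+(n_k)$ for $k$ sufficiently large. The only substantive ingredients are the classical Chebyshev bounds, and the constants $\log 2$ and $\log 4$ appearing in the hypothesis are tuned exactly to what these elementary estimates deliver; no subtler input is required, and the only minor obstacle is tracking how large $k$ must be to absorb $M$ in the lower bound.
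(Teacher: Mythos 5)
Your proof is correct and follows essentially the same route as the paper: take logarithms in the identity $\primorial{P^+(n)} = \tn{rad}(n)\,\insulator{n}$, use the boundedness of $\insulator{n_k}$, and apply two-sided Chebyshev bounds on $\theta$. The only cosmetic difference is that you absorb the bounded additive error $M = \sup_k \log\insulator{n_k}$ directly using $P^+(n_k)\to\infty$, whereas the paper first converts it into a multiplicative $(1-\epsilon)$ factor and then cites a two-sided estimate of the form $\frac{\alpha}{1-\epsilon}x < \theta(x) < \beta x$ from Tenenbaum; both are correct and the underlying content is identical.
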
 
\begin{proof} Choose a real number $\epsilon > 0$ such that $0 < \alpha/(1-\epsilon) < \log 2$. By equation (\ref{measure_fail}) we have
\begin{equation}
	 \log \tn{rad}(n_k) = \log \primorial{P^+(n_k)} - \log \insulator{n_k}.
\end{equation}
Since $\insulator{n_k}$ is bounded, it follows that
\begin{equation} \label{lograd_on_primorial}
(1-\epsilon) \log \primorial{P^+(n_k)} < \log \tn{rad}(n_k) \leq \log \primorial{P^+(n_k)}
\end{equation}
for $P^+(n_k)$ sufficiently large. This inequality can be written as
\begin{equation}\label{lograd_on_chebyshev}
(1-\epsilon) \theta(P^+(n_k)) < \log \tn{rad}(n_k) \leq \theta(P^+(n_k))
\end{equation} 
in terms of the Chebyshev function $\theta(x) = \sum_{p \leq x} \log p = \log \primorial{x}$. By applying \cite[Corollary 2.10.1]{MR1342300} we obtain
\begin{equation}\label{chebyshev_on_smoothness}
\frac{\alpha}{(1-\epsilon)} P^+(n_k) < \theta(P^+(n_k)) < \beta P^+(n_k)
\end{equation}
when $P^+(n_k)$ is sufficiently large. Combining (\ref{lograd_on_chebyshev}) and (\ref{chebyshev_on_smoothness}) with the hypothesis that $\lim_{n \to \infty} P^+(n_k) = \infty$, we obtain the validity of (\ref{lograd_on_smoothness}) for $k$ sufficiently large.
\end{proof}

\section{Insulators of ABC Solutions}

We now use the language of insulators to study solutions in $\abcs$. Define the {\bf insulator} of a solution $(A,B,C) \in \abcs$ by $\mathcal I(A,B,C) = \insulator{ABC}$.
\begin{conj} \label{insulator_conjecture}The function $\mathcal I : \abcs \to \RR$ defines a semi-height on $\abcs$. 
\end{conj}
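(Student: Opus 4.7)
The plan is to prove the statement conditionally on the strong form of the XYZ Conjecture; this yields the Main Theorem, and every tool assembled so far points in that direction. Since $\mathcal I$ is positive-integer-valued, being a semi-height is equivalent to the finiteness of each fiber $\{(A,B,C) \in \abcs : \mathcal I(A,B,C) \leq M\}$ for $M \in \ZZ_{>0}$, because $\abcs_{\mathcal I}(X)$ is then a finite union of such fibers.

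I would argue by contradiction. Suppose some $M$ admits an infinite sequence $\{(A_k, B_k, C_k)\} \subset \abcs$ with $\mathcal I(A_k, B_k, C_k) \leq M$. By Lemma \ref{semi_heights} the smoothness $S$ is itself a semi-height, so after passing to a subsequence I may assume $S(A_k, B_k, C_k) \to \infty$; equivalently, $P^+(n_k) \to \infty$ where $n_k = A_k B_k C_k$. Applying Lemma \ref{sandwich} to $\{n_k\}$, for any fixed $\alpha \in (0, \log 2)$ I obtain
\begin{equation}
\log N(A_k, B_k, C_k) = \log \tn{rad}(A_k B_k C_k) > \alpha \, S(A_k, B_k, C_k)
\end{equation}
for all $k$ sufficiently large. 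Since $N(A,B,C) \leq |ABC| \leq H(A,B,C)^3$, taking logarithms gives
\begin{equation}
\log H(A_k, B_k, C_k) > \frac{\alpha}{3}\, S(A_k, B_k, C_k)
\end{equation}
for $k$ sufficiently large. Setting $\delta = \alpha/3$ then produces an infinite subfamily violating Conjecture \ref{weak_xyz}, which contradicts Proposition \ref{prop1} under the assumption of the strong XYZ Conjecture.

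The real work is done by Lemma \ref{sandwich}: once the insulator is bounded, $\tn{rad}(ABC)$ is forced to be nearly a primorial, and Chebyshev's estimates translate this into the linear lower bound $\log N > \alpha\, S$. The main obstacle is that the argument is intrinsically conditional. An unconditional proof of the conjecture would require a mechanism to prevent $\log H$ from becoming a vanishingly small fraction of $S$, and no such mechanism is known to follow from the ABC Conjecture, let alone unconditionally; new ideas beyond the height comparisons used here would be needed to remove the dependence on XYZ.
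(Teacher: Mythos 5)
Your proposal is correct and is essentially the paper's own argument: Theorem~\ref{thm1} proves Conjecture~\ref{weak_xyz}~$\Rightarrow$~Conjecture~\ref{insulator_conjecture} by exactly this chain (bounded insulator $\Rightarrow$ Lemma~\ref{sandwich} $\Rightarrow$ $\log\tn{rad}(n_k) > \alpha S$ $\Rightarrow$ $\log H > (\alpha/3)S$, contradicting Conjecture~\ref{weak_xyz}), and Proposition~\ref{prop1} supplies the implication from Conjecture~\ref{xyzconj}(a). The only cosmetic difference is that you pass to a subsequence to get $S(A_k,B_k,C_k)\to\infty$, whereas the paper notes this holds for the whole sequence directly since $S$ is a semi-height; both are fine.
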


\begin{theorem} \label{thm1} Conjecture \ref{weak_xyz} implies Conjecture \ref{insulator_conjecture}.\end{theorem}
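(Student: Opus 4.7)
The plan is to argue by contradiction via the contrapositive: assume Conjecture \ref{insulator_conjecture} fails, and from this produce a counterexample to Conjecture \ref{weak_xyz}. Concretely, suppose there exist a constant $I_0 > 0$ and infinitely many distinct solutions $(A_k, B_k, C_k) \in \abcs$ with $\mathcal I(A_k, B_k, C_k) \leq I_0$. Since Lemma \ref{semi_heights} shows that $S$ is a semi-height on $\abcs$, the set of solutions with $S(A,B,C) < X$ is finite for every $X$, so after passing to a subsequence we may assume $S(A_k, B_k, C_k) \to \infty$. Setting $n_k = A_k B_k C_k$, the sequence $n_k$ then satisfies $P^+(n_k) = S(A_k, B_k, C_k) \to \infty$ and $\insulator{n_k} = \mathcal I(A_k, B_k, C_k) \leq I_0$, which are precisely the hypotheses of Lemma \ref{sandwich}.

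Next, I would fix some $\alpha \in (0, \log 2)$ (say $\alpha = 1/2$) and invoke Lemma \ref{sandwich} to conclude that
$$\alpha \, S(A_k, B_k, C_k) \;<\; \log N(A_k, B_k, C_k)$$
for all sufficiently large $k$. On the other hand, the conductor trivially satisfies $N(A,B,C) = \tn{rad}(ABC) \leq |ABC| \leq H(A,B,C)^3$, so $\log N \leq 3 \log H$. Combining these two inequalities yields
$$\log H(A_k, B_k, C_k) \;>\; \frac{\alpha}{3}\, S(A_k, B_k, C_k)$$
for $k$ large. Setting $\delta := \alpha/3 > 0$, this produces infinitely many $(A,B,C) \in \abcs$ with $\log H(A,B,C) > \delta \, S(A,B,C)$, contradicting Conjecture \ref{weak_xyz}.

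All the analytic content has already been packaged into Lemma \ref{sandwich}, which via Chebyshev's estimates converts the bounded-insulator hypothesis into the quantitative statement that $\log \tn{rad}(A_k B_k C_k)$ grows linearly in the smoothness $S(A_k, B_k, C_k)$; from there, the trivial bound $N \leq H^3$ does the remaining work. I do not anticipate any genuine obstacle. The one point that warrants care is the initial extraction of a subsequence: a bounded insulator alone does not force $S \to \infty$, and so one must explicitly invoke that $S$ is a semi-height together with the distinctness of the solutions in order to justify passing to a subsequence on which $S(A_k, B_k, C_k) \to \infty$.
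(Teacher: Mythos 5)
Your proof is correct and follows essentially the same route as the paper's: negate the semi-height claim for $\mathcal{I}$, feed the resulting sequence $n_k = A_k B_k C_k$ into Lemma \ref{sandwich} (using that $S$ is a semi-height to get $P^+(n_k) \to \infty$), and then combine the lower bound $\alpha S(A_k,B_k,C_k) < \log\mathrm{rad}(n_k)$ with the trivial bound $\mathrm{rad}(ABC) \leq H(A,B,C)^3$ to contradict Conjecture \ref{weak_xyz} with $\delta = \alpha/3$. One small cosmetic point: no subsequence extraction is needed, since the semi-height property of $S$ together with distinctness already forces $S(A_k,B_k,C_k) \to \infty$ along the whole sequence, which is how the paper phrases it.
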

\begin{proof} We prove the contrapositive. Let $\{ (A_k, B_k, C_k) \}_{k = 1}^\infty$ be an infinite sequence of distinct elements of
\begin{equation}
\abcs_{\mathcal I}(X) = \{ (A,B,C) \in \abcs \mid \mathcal I(A,B,C) < X \}
\end{equation}
for some real number $X$. Let $\{n_k\}_{k = 1}^\infty$ be given by $n_k = A_k B_k C_k$. 

We show that $\{n_k\}_{k = 1}^\infty$ satisfies the hypotheses of Lemma \ref{sandwich}. Indeed, $\insulator{n_k} = \mathcal I(A_k, B_k, C_k) < X$ for all $k$ since $(A_k, B_k, C_k) \in \abcs_{\mathcal I}(X)$. Also, $\lim_{k \to \infty} P^+(n_k) = \infty$ since $P^+(n_k) = S(A_k, B_k, C_k)$ and $S : \abcs \to \RR$ is a semi-height on $\abcs$ by Lemma \ref{semi_heights}. For any constant $\alpha$ with $0 < \alpha < \log 2$,  Lemma \ref{sandwich} implies
\begin{equation}\label{apply_lemma}
	\alpha P^+(n_k) < \log \tn{rad}(n_k)
\end{equation} 
for $k$ sufficiently large.

To obtain a contradiction to Conjecture \ref{weak_xyz}, we proceed by observing
\begin{align}\label{height_rad}
	\log H(A_k, B_k, C_k) & \geq \tfrac{1}{3} \log \tn{rad}(A_k B_k C_k) = \tfrac{1}{3} \log \tn{rad}(n_k).
\end{align}
Combining inequalities (\ref{apply_lemma}) and (\ref{height_rad}) and letting $\delta = \alpha / 3$ we have 
\begin{align}
	\log H(A_k, B_k, C_k) > \delta \,  P^+(n_k)
\end{align}
 for $k$ sufficiently large. Since $P^+(n_k) = S(A_k, B_k, C_k)$, this implies
\begin{align}
\log H(A_k, B_k, C_k) > \delta \, S(A_k, B_k, C_k)
\end{align}
for all sufficiently large $k$. This contradicts Conjecture \ref{weak_xyz}.
\end{proof}

We now prove the main theorem.
\begin{proof}[Proof of the Main Theorem] Assume the strong form of the XYZ Conjecture holds. Then Proposition \ref{prop1} and Theorem \ref{thm1} imply that $\mathcal I : \abcs \to \RR$ is a semi-height. It follows that all solutions $(A,B,C) \in \abcs$ with a particular insulator $\mathcal I_0$ are contained in the finite set $\abcs_{\mathcal I}(\mathcal I_0 + 1)$. 
\end{proof}

\end{document}